\documentclass[twoside,reqno,11pt]{fcaa-var}

\usepackage{graphicx}
\usepackage{epsfig}

\usepackage{amsthm}
\usepackage{amsmath}
\usepackage{latexsym}
\usepackage{amsfonts}
\usepackage{amssymb}

 \textwidth  12.5cm \textheight 19cm
 \topmargin 0in
 \evensidemargin 1.3cm \oddsidemargin 1.3cm

 \hoffset 0.71cm  \voffset  1.5cm
 \baselineskip=18pt   \parindent=18pt

\newtheoremstyle{theorem}
  {15pt}          
  {15pt}  
  {\sl}  
  {\parindent}
  {\sc}  
  {. }   
  { }    
  {}     
\theoremstyle{theorem}
\newtheorem{lemma}{Lemma}[section]
\newtheorem{theorem}{Theorem}[section]

\newtheorem{proposition}{Proposition}[section]

\newtheoremstyle{defi}
  {15pt}          
  {15pt}  
  {\rm}  
  {\parindent}     
  {\sc}  
  {. }    
  { }    
  {}     
\theoremstyle{defi}
\newtheorem{definition}{Definition}[section]
\newtheorem{remark}{Remark}[section]



\usepackage{hyperref}

\usepackage{graphicx}
\usepackage{float}
\usepackage{braket}
\usepackage{bm}
\usepackage{mathrsfs}
\usepackage{mathtools,slashed}

\usepackage{changes}
\definechangesauthor[name=Eric, color=blue]{ed}
\definechangesauthor[name=Natalia, color=orange]{NR}

\usepackage[normalem]{ulem}
\usepackage{color}

\newcommand{\be}{\begin{eqnarray}}
\newcommand{\ee}{\end{eqnarray}}

\newcommand{\R}{\mathbb{R}}  
\newcommand{\N}{\mathbb{N}} 

\def\eu{\ensuremath{\mathrm{e}}}

\def\eg{{\it e.g.}}
\def\ie{{\it i.e.}}

\newcommand{\wh}[1]{\widehat{#1}}

\def\d{{\rm d}}



\setcounter{page}{1}
\thispagestyle{empty}
  

 \title[ON THE FRACTIONAL LAPLACIAN \ldots]
 {ON THE FRACTIONAL LAPLACIAN OF\\ VARIABLE ORDER}
 \author[\normalsize E. Darve, M. D'Elia, R. Garrappa, A. Giusti, N. L. Rubio]
 {\normalsize Eric Darve$^1$, Marta D'Elia$^2$, Roberto Garrappa$^3$,\vspace*{2pt}\\ 
Andrea Giusti$^4$, Natalia L. Rubio$^5$ \vspace*{-4pt}}

\begin{document}

\vspace*{-1cm} 
\noindent
\par
\vspace*{-0.54cm} \vspace*{-28pt}
\hfill \hspace*{4cm}  

\medskip

\noindent \textbf{Short Note} \bigskip  

\begin{abstract}

We present a novel definition of variable-order fractional Laplacian on $\R^n$ based on a natural generalization {of the standard Riesz potential}. Our definition holds for values of the fractional parameter spanning the entire open set $(0, n/2)$. We then discuss some properties of the fractional Poisson's equation involving this operator and we compute the corresponding Green's function, for which we provide some instructive examples for specific problems. 

\medskip

{\it MSC 2010\/}: Primary: 35R11, 26A33; Secondary: 42A38

\smallskip

{\it Key Words and Phrases}:
Variable-order fractional Laplacian; Fourier transform.

\end{abstract}

\maketitle
\vspace*{-25pt}



\section{Introduction} \label{sec:intro}
\setcounter{section}{1} \setcounter{equation}{0}

The Laplace operator, or {\em Laplacian}, is a linear elliptic second-order ordinary differential operator defined on $\R^n$, with $n \in \N$, as the divergence of the gradient of a sufficiently regular function, {\ie}, $\triangle f(\bm{x}) := {\rm div} \big[ \bm{\nabla} f(\bm{x}) \big]$. This operator is ubiquitous in mathematical physics and finds applications ranging from wave dynamics, electrodynamics, gravity to biophysics and probability theory.

The increasing number of experimental observations of anomalous physical phenomena, {such as super- or sub-diffusion processes, recently motivated} the development of novel nonlocal models for these scenarios. These anomalous phenomena provide the perfect playground for fractional calculus, which is inherently a nonlocal theory, and, incidentally, provides us with several generalizations of the classical Laplacian. More precisely, the {\em fractional Laplacian} of order $s \in (0,1)$ can be defined in many equivalent ways on the whole $\R^n$ (see, {\em e.g.}, \cite{Lap-3}); however, once these definitions are restricted to bounded subsets of $\R^n$, they generally lead to different operators, thus spoiling the uniqueness of the definition of the fractional Laplacian. Yet, this is a very common trait of fractional operators \cite{FC1,FC2} and it does not represent a reason of concern. For a more detailed overview on the topic we refer the interested reader to \cite{Lap-1,Lap-2,Lap-3,Unified}.

In recent years, some attention has been devoted to the study of fractional models displaying a continuous transition between different fractional orders {to describe highly heterogeneous systems}. Such scenarios have motivated the development of {\em variable-order fractional operators}, for which the order itself becomes a function of time and/or space. This effort has therefore lead to the formulation of several inequivalent definitions of variable-order fractional derivatives and integrals {(see, {\em e.g.}, \cite{VOFD-1,VOFD-2,Felsinger2015,Schilling2015,DElia2021,AntilRautenberg2019}).}

Beside the non-uniqueness issue coming with variable-order fractional operators, there is also the problem of  providing a proper physical motivation for each of these definitions. In the largest part of the literature, the choice of the specific representation of these operators is often based on their aptness to reconcile the proposed mathematical model with experimental data. However, {this approach not only leads to representations that might be restricted to a limited range of fractional orders, disregarding physically-relevant cases}, but also lacks a proper justification that would follow from a bottom-up derivation of the mathematical model relying solely on basic principles of physics.

While there is no simple way of formulating a definition  of the fractional Laplacian as an operator in the real space for a general $s >0$, this complication disappears if such operator is 
defined starting from the Fourier space. Indeed, by means of the spectral representation one can define the fractional Laplacian 
for $0 < s < n/2$. This is done at the cost of trading an explicit real-space representation in favour of a larger range for $s$ taking advantage of a somewhat weaker definition for the operator. In contrast, the inverse fractional Laplacian is a well-defined operator in both real and Fourier spaces for all $s>0$. Clearly, a real space definition is essential if one wishes to extend these notions to 
the case of space-varying order. To this end we shall combine the definitions of inverse fractional Laplacian and the spectral representation to define the fractional Laplacian with $s=s(\bm{x})$. As a result we do not propose a direct definition of the variable-order fractional Laplacian in the real space since this is problematic when $1 < s < n/2$. Furthermore, since most of modern physics relies on the spectral decomposition of physical operators, this formulation of 
variable-order fractional Laplacian comes in handy for several potential applications. Indeed, it is worth mentioning that this approach finds part of its motivation in some recent results emerging from the theoretical study of galaxy rotation curves. In a nutshell, in \cite{MOND1, MOND2} it was argued that the phenomenology typically ascribed to dark matter can be understood as a modification of Newtonian gravity at galactic scales, thus without assuming the existence of an exotic and mysterious form of matter. This scenario then requires the field equation for the gravitational potential to become a variable-order fractional Poisson's equation with fractional order $s(\bm{x})$ ranging from $(1,3/2)$ in three spatial dimensions.

It is particularly relevant to note that the range $1 < s < n/2$ seems to be largely neglected in the literature on the variable-order fractional Laplacian and, to some extent, also for the case of the constant order one. This is mostly due to the fact that the vast majority of variable-order generalizations of this operator rely on real-space representations of the fractional Laplacian that typically force the restriction $0< s < 1$. For instance, this is the case of definitions that take advantage of the singular integral representation of the fractional Laplacian. The latter in fact is known to fail for $s \notin (0,1)$. We will come back to this point in Section \ref{sec:VO}.

Note that this study aims at providing a novel perspective on variable-order fractional models providing a new tool for potential physical applications. For this reason a rigorous mathematical analysis of the presented operators will be discussed elsewhere.  

\subsubsection*{Paper outline} This work is organized as follows. In Section \ref{sec:pre} we review some basic results on the standard fractional Laplacian, Riesz potential, and the fractional Poisson's equation. In Section \ref{sec:VO} we present a new definition of the variable-order fractional Laplacian and we derive the Green's function for the corresponding variable-order fractional Poisson's equation. In Section \ref{sec:example} we provide two instructive examples where we further discuss the properties of the Green's function of the variable-order fractional Poisson's equation on $\R^3$ for $0<s(\bm{x}) <1$ and $1<s(\bm{x}) <3/2$.

\vspace*{-3pt} 

\section{Preliminaries} \label{sec:pre}
\setcounter{section}{2} \setcounter{equation}{0}

\subsection{The classical Laplacian}

One of the key properties of the classical Laplacian is given by its {\em spectral representation}. Let $\mathcal{S} (\R^n)$ be the Schwartz space on $\R^n$, $f \in \mathcal{S} (\R^n)$, and $|\bm{k}|^2 := \bm{k}\cdot \bm{k}$. The Fourier transform of $f (\bm{x})$ is denoted by
$$
\wh{f} (\bm{k}) \equiv \mathcal{F} \left[ f (\bm{x}) \, ; \, \bm{k} \right] := 
\int _{\R^n} \eu ^{- i \, \bm{k}\cdot\bm{x}} \, f (\bm{x}) \, \d ^n x,
$$
so that
$$
\mathcal{F} \left[(-\triangle) f (\bm{x}) \, ; \, \bm{k} \right] = |\bm{k}|^2 \, \wh{f} (\bm{k}).
$$
Thus, the spectrum of the classical Laplacian is given by $\sigma (-\triangle) = [0, \infty)$.

Setting aside the Laplace equation, that constitutes the fundamental features of harmonic functions and plays an important role when seeking vacuum solutions for various physical problems, we focus on the {\em Poisson's equation}, \ie, 
\be
\label{eq:Poisson}
\triangle f(\bm{x}) = g(\bm{x}).
\ee
In this context, the spectral decomposition of the Laplacian provides an easy way to compute the fundamental solution of Eq. \eqref{eq:Poisson} through the method of Green's functions. Specifically, consider 
\be
\label{eq:PoissonGreen}
\triangle G (\bm{x}) = \delta (\bm{x}) \, ,
\ee
on the space of tempered distribution $\mathcal{S}' (\R^n)$, with $\delta (\bm{x})$ the Dirac delta in $n$-dimensions. For $n\geq 3$, by taking the Fourier transform of \eqref{eq:PoissonGreen}, solving for $\wh{G} (\bm{k})$, and inverting back to the coordinate space one finds
\be
G_n (\bm{x}) = - \frac{\Gamma \left(\frac{n}{2} - 1 \right)}{4 \pi ^{n/2}} \frac{1}{|\bm{x}|^{n-2}} \, ,
\ee
with $\Gamma (z)$ denoting Euler's gamma function. Note that the case $n=2$ displays a logarithmic behavior that can be rigorously derived exploiting the divergence theorem and Green's identities. Then, the fundamental solution of Eq. \eqref{eq:Poisson} reads
\be
f(\bm{x}) = (G_n \star g) (\bm{x}) \equiv \int _{\R ^n} G_n (\bm{x} - \bm{y}) \, g(\bm{y}) \, \d ^n y \, . 
\ee

\subsection{Fractional calculus}

Fractional calculus \cite{FC1,FC2,FC3} is a theory that provides a generalization of ordinary calculus based on {\em weakly singular} Volterra-like linear integro-differential operators. Within this theory one can provide a generalization of the notion of Laplacian following various routes (see, \eg, \cite{Lap-2}). The simplest definition of {\em fractional Laplacian}, as well as the least restrictive for the order of the resulting operator, relies on the spectral representation of the standard Laplacian, as we describe below.
\begin{definition}[Spectral representation] \label{def:spectralLap}
Let $f\in \mathcal{S} (\R^n)$ and $s \in (0, n/2)$. We define the fractional Laplacian $(-\triangle)^s$ as the linear operator such that
\begin{equation} \label{eq:spectralLap}
\mathcal{F} \left[(-\triangle)^s f (\bm{x}) \, ; \, \bm{k} \right] = |\bm{k}|^{2s} \, \wh{f} (\bm{k}) \, ,
\end{equation}
with $s$ denoting the (fractional) order of the operator.
\end{definition}

\begin{remark}
The results discussed in this work can be extended to a space of functions larger than $\mathcal{S} (\R^n)$. 
However, {the precise characterization of these spaces is not germane to the main message and motivation of this study. For details on this matter we refer the reader to, \eg, \cite{Lap-3}.}
\end{remark}

In the literature it is possible to find plenty of different representations of $(-\triangle)^s$ holding for $s \in U \subset (0, n/2)$, in most cases $U = (0,1)$. However, these alternative representations turn out to be equivalent to the spectral representation in Definition \ref{def:spectralLap} at least for some subsets of $(0, 1)$ and given some restrictions on the domain of applicability of the specific representation, see \cite{Lap-3}. 

Since we are interested in the fundamental solution of the {\em fractional Poisson's equation}, \ie,
\begin{equation}\label{eq:FractionalPoisson}
(-\triangle)^s f(\bm{x}) = - g(\bm{x}) \, ,
\end{equation}
it is natural to wonder about the existence of an ``inverse fractional Laplacian'' $(-\triangle)^{-s}$ that would allow to read-off the solution for any given source term $g(\bm{x})$. Indeed, such an operator exists and it is related to the well-known {\em Riesz potential} \cite{Riesz}, defined as follows.
\begin{definition}[Riesz potential] \label{def:RieszPotential}
Let $f\in L^1 _{\rm loc} (\R^n)$ and $\alpha \in (0, n)$, we define the Riesz potential $I_\alpha f$ as
\begin{equation}\label{eq:NotUsed1}
I_\alpha  f(\bm{x}) := \frac{\Gamma \left(\frac{n - \alpha}{2} \right)}{2^\alpha \pi^{\frac{n}{2}} \Gamma \left(\frac{\alpha}{2} \right)}
\int _{\R^n} \frac{f(\bm{y})}{|\bm{x} - \bm{y}|^{n-\alpha}} \, \d ^n y \, .
\end{equation} 
\end{definition}

By defining the kernel
\begin{equation}
\label{eq:KernelInverse}
K_s (\bm{x}) := \frac{\Gamma \left(\frac{n }{2} - s \right)}{4^s \pi^{\frac{n}{2}} \Gamma \left(s \right)}
\frac{1}{|\bm{x}|^{n-2 s}} \, ,
\end{equation}
we see that $I_{2s}  f(\bm{x}) = (K_s \star f) (\bm{x})$. 
Then, by noting that 
$$\widehat{K_s} (\bm{k}) = |\bm{k}|^{-2s}$$
one can prove the following theorem.
\begin{theorem}
Let $s \in (0, n/2)$, then
\be
(-\triangle)^s I_{2s} f(\bm{x}) =I_{2s}  (-\triangle)^s f(\bm{x}) = f(\bm{x})
\ee
for any $f \in \mathcal{S} (\R^n)$. In other words, $(-\triangle)^{-s} = I_{2s}$.
\end{theorem}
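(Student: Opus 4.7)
\medskip

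\noindent\textbf{Proof plan.} The approach is purely Fourier-analytic: combine the spectral definition \eqref{eq:spectralLap} with the convolution theorem, using the identity $\widehat{K_s}(\bm{k}) = |\bm{k}|^{-2s}$ quoted right before the statement.

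First, I would check that $I_{2s} f$ makes sense as a smooth tempered function. Since $s \in (0, n/2)$, the kernel $K_s$ defined in \eqref{eq:KernelInverse} is locally integrable (the singularity at the origin is of order $n - 2s < n$) and behaves at infinity like $|\bm{x}|^{-(n-2s)}$. Together with the Schwartz decay of $f$, this ensures that the convolution integral $I_{2s} f(\bm{x}) = (K_s \star f)(\bm{x})$ converges absolutely and defines a $C^\infty$ function of polynomial decay, hence a tempered distribution.

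Next, I would apply the convolution theorem to obtain
\begin{equation*}
\mathcal{F}\!\left[I_{2s} f\,;\,\bm{k}\right] = \widehat{K_s}(\bm{k}) \, \widehat{f}(\bm{k}) = |\bm{k}|^{-2s} \, \widehat{f}(\bm{k}),
\end{equation*}
which is a locally integrable tempered distribution. Feeding this into Definition \ref{def:spectralLap} gives
\begin{equation*}
\mathcal{F}\!\left[(-\triangle)^s I_{2s} f\,;\,\bm{k}\right] = |\bm{k}|^{2s} \cdot |\bm{k}|^{-2s} \, \widehat{f}(\bm{k}) = \widehat{f}(\bm{k}),
\end{equation*}
so Fourier inversion yields $(-\triangle)^s I_{2s} f = f$. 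For the reverse composition, one computes $\mathcal{F}[(-\triangle)^s f](\bm{k}) = |\bm{k}|^{2s} \widehat{f}(\bm{k})$, which, being smooth away from the origin and rapidly decreasing, is in the class to which $I_{2s}$ applies; by the same Fourier manipulation $I_{2s}(-\triangle)^s f = f$ follows.

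The main subtlety, and the only place one has to be careful, is the point $\bm{k}=0$, where $|\bm{k}|^{2s}\cdot|\bm{k}|^{-2s}$ is of the indeterminate form $0\cdot\infty$. This has to be interpreted in the sense of tempered distributions: for $s\in(0,n/2)$ both $|\bm{k}|^{\pm 2s}\widehat{f}(\bm{k})$ define tempered distributions (the singularities at the origin are integrable against Schwartz test functions since $2s<n$), and the product equals $\widehat{f}(\bm{k})$ almost everywhere and agrees with it in the distributional pairing with any $\varphi \in \mathcal{S}(\R^n)$. Once this pointwise-a.e.\ identification is justified, the rest of the argument is routine.
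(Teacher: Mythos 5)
Your proof is correct and follows exactly the Fourier-analytic route that the paper itself indicates; the paper does not spell out a proof but simply remarks that the identity $\widehat{K_s}(\bm{k}) = |\bm{k}|^{-2s}$ combined with Definition \ref{def:spectralLap} yields the result, and your proposal is a faithful fleshing-out of that hint (including the observation that the apparent indeterminacy at $\bm{k}=0$ is harmless since it occurs on a null set and both factors are locally integrable). The only point you could make slightly crisper is the domain issue: $I_{2s}f$ is generally not Schwartz, so one is implicitly extending Definition \ref{def:spectralLap} to the larger function class alluded to in the paper's Remark after that definition; noting this explicitly would align your write-up with the paper's own caveat.
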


\vspace*{-3pt} 

\section{Variable-order fractional Laplacian} \label{sec:VO}
\setcounter{section}{3} \setcounter{equation}{0}

The standard approach to the variable-order fractional Laplacian relies on the {\em singular integral representation} of $(-\triangle)^s$, \ie,
\be
\label{eq:standardVO}
\nonumber
(-\triangle)^s f(\bm{x}) &=&
\frac{4^s \, \Gamma \left(\frac{n}{2} - s \right)}{\pi^{\frac{n}{2}} |\Gamma \left(-s\right)|} \, 
{\rm P.V.} \int _{\R^n} \frac{f(\bm{x}) - f(\bm{y})}{|\bm{x} - \bm{y}|^{n+2s}} \, \d ^n y\\
&=&
\lim _{\epsilon \to 0} \frac{4^s \, \Gamma \left(\frac{n}{2} - s \right)}{\pi^{\frac{n}{2}} |\Gamma \left(-s\right)|} \, 
\int _{\R^n \setminus B_{\epsilon} (\bm{x})} \frac{f(\bm{x}) - f(\bm{y})}{|\bm{x} - \bm{y}|^{n+2s}} \, \d ^n y
\ee
which is well defined for $s \in (0,1)$, with $B_{\epsilon} (\bm{x})$ denoting the ball radius $\epsilon$ centered at $\bm{x}$. The variable-order fractional Laplacian can then be obtained in several ways. The most straightforward extension consists in making $s$ a space-dependent quantity in Eq. \eqref{eq:standardVO}, see, e.g., \cite{Felsinger2015,Schilling2015}. Similarly to this approach, paper \cite{DElia2021} defines $s$ as a two-point space-dependent parameter, i.e., $s(\bm{x},\bm{y})$. Both these procedures have the caveat that the resulting variable-order operator is constrained by the condition $0<s(\bm{x})<1$, for a sufficiently regular $s(\bm{x})$, and do not span the whole set of values allowed by the spectral representation in Definition \ref{def:spectralLap}. Furthermore, this representation complicates both the analytical and numerical aspects of the study of fractional partial differential equations involving such an operator.

In this work we propose an alternative definition of variable-order fractional Laplacian with the purpose {of circumventing the issues} mentioned above. 
Before getting into the details of our proposal, it is convenient to introduce a few notions that will come in handy in what follows. Let us define the radial function 
\be
\label{eq:VOKernel}
{K}_{s(\cdot)} (\bm{x}) := 
\frac{\Gamma \left(\frac{n}{2} - s(|\bm{x}|) \right)}{4^{s (|\bm{x}|)} \pi^{\frac{n}{2}} \Gamma \left(s (|\bm{x}|) \right)}
\frac{1}{|\bm{x}|^{n-2 s (|\bm{x}|)}} \, ,
\ee
where $s(|\bm{x}|)$ is a sufficiently regular function with range $(0, n/2)$, then we have the following  definition.

\begin{definition}[Variable-order fractional Laplacian (VOFL)]
\label{def:VOFL} 
Let $s \in C^1[\R^n; (0, n/2)]$ such that
\begin{align*}
 \lim _{|\bm{x}| \to 0} s (\bm{x}) &= s_1 \in (0, n/2) \, , \\
 \lim _{|\bm{x}| \to \infty} s (\bm{x}) &= s_2 \in (0, n/2) \, ,
\end{align*}
and so that ${K}_{s(\cdot)}$ in Eq. \eqref{eq:VOKernel} has a non-vanishing, sufficiently regular Fourier transform $\widehat{K}_{s(\cdot)} (\bm{k})$ for which $\wh{f} (\bm{k}) / \widehat{K}_{s(\cdot)} (\bm{k})$ admits an inverse Fourier transform for any $f \in \mathcal{S} (\R^n)$.  Then, the variable-order fractional Laplacian $(- \triangle)^{s(\bm{\cdot})}$ is defined as the operator satisfying the condition
\be
\label{eq:definition-spectrum}
\mathcal{F} \left[(- \triangle)^{s(\bm{\cdot})} f (\bm{x}) \, ; \, \bm{k} \right] = \frac{\wh{f} (\bm{k})}{\widehat{K}_{s(\cdot)} (\bm{k})} \, .
\ee
\end{definition}

\begin{remark}
We stress the fact that, compared to the variable-order definitions mentioned above, here, $s$ is a radial function. {The reasons of this choice will be clear in Remark \ref{re:radial}.}
\end{remark}

\begin{remark}
The continuity condition and the requirements on the asymptotic behavior of $s(\bm{x})$ tell us that ${K}_{s(\cdot)} \in L ^1 _{\rm loc} (\R^n)$ and it decays at infinity as $1/|\bm{x}|^{n-2 s_2}$. This means that ${K}_{s(\cdot)} (\bm{x})$ can be Fourier-transformed and $\widehat{K}_{s(\cdot)} \in \mathcal{S}'(\R^n)$. 
\end{remark}

Furthermore, if we define a variable-order Riesz potential $I_{2 s(\cdot)}$ as
\be
\label{eq:voriesz}
I_{2 s(\cdot)} f(\bm{x}) := \left( {K}_{s(\cdot)} \star f \right) (\bm{x}) \, ,
\ee
for a sufficiently regular function $f(\bm{x})$ one can show that
\be
(- \triangle)^{s(\cdot)} I_{2 s(\cdot)} f(\bm{x}) = I_{2 s(\cdot)} (- \triangle)^{s(\cdot)} f(\bm{x}) = f(\bm{x}) \, , 
\ee
taking advantage of the spectral representation of $I_{2 s(\cdot)}$ and of Definition \ref{def:VOFL}. Note that in order to identify the variable-order Riesz potential as the {\em inverse} of the VOFL these two operators have to be bijections between function spaces. A precise characterization of the function spaces involved in Definition \ref{def:VOFL} is beyond the scope of this Letter and it is left to future studies.

This new definition of VOFL extensively simplifies the analytical treatment of problems involving this operator, though they still remain somewhat problematic at the numerical level when $s \ge 1$. For instance, with the operator in Definition \ref{def:VOFL} one can compute the Green's function for the variable-order Poisson's equation
\be
\label{eq:vopoisson}
(- \triangle)^{s(\cdot)} \Phi (\bm{x}) = - \delta (\bm{x}) \, .
\ee
Indeed, taking the Fourier transform of both sides of the latter equation and multiplying by 
$\widehat{K}_{s(\cdot)} (\bm{k})$ on gets
\be
\wh{\Phi} (\bm{k}) = - \widehat{K}_{s(\cdot)} (\bm{k}) \, .
\ee
Then, computing the inverse Fourier transform one finds
\be
\Phi (\bm{x}) = - {K}_{s(\cdot)} (\bm{x}) = - \frac{\Gamma \left(\frac{n}{2} - s(|\bm{x}|) \right)}{4^{s (|\bm{x}|)} \pi^{\frac{n}{2}} \Gamma \left(s (|\bm{x}|) \right)}
\frac{1}{|\bm{x}|^{n-2 s (|\bm{x}|)}} \, .
\ee
Hence, given a source function $g(\bm{x})$ the solution of
\be
(-\triangle)^{s(\cdot)} f(\bm{x}) = - g(\bm{x}) \, ,
\ee
simply reads
\be
f(\bm{x}) = \left( \Phi \star g \right) (\bm{x}) = -\left( K_{s(\cdot)} \star g \right) (\bm{x}) \, .
\ee

\begin{remark}\label{re:radial}
Note that the kernel in Eq. \eqref{eq:VOKernel} is a radial function, \ie, it depends solely on $|\bm{x}|$. This is choice aims at preserving the invariance under rotations of the Green's function for the proposed VOFL. In other words, we required the Green's function for the variable-order Poisson's equation to be scalar (i.e., invariant) under rotations.
\end{remark}

Finally, it is worth pointing out that the specific choice of \eqref{eq:VOKernel} and the assumptions in Definition \ref{def:VOFL} select ({\em a priori}) a specific asymptotic behaviour for the Green's function of \eqref{eq:vopoisson}, namely both $\Phi (\bm{x})$ and $|\bm{\nabla}\Phi (\bm{x})|$ vanish as $|\bm{x}|\to \infty$.

\vspace*{-3pt} 

\section{{Two instructive examples}} \label{sec:example}
\setcounter{section}{4} \setcounter{equation}{0}

\subsection{First example}\label{sec:firstexample}
We consider the fundamental solution, in $n=3$ dimensions, of Eq. \eqref{eq:vopoisson} with
\be
\label{eq:firstexample}
s(r) = \frac{6 + 9 \, r}{10(1 +  r)} \, ,
\ee
where $r=|\bm{x}|$. Then $s\in C^1[\R^3;(0.6,0.9)]$ and
\[
\begin{aligned}
& s (r) = 6/10 + \mathcal{O} (r) \, , & &\mbox{as} \,\, r \to 0 \, , \\
& s (r) = 9/10 + \mathcal{O} (r^{-1}) \, , & &\mbox{as} \,\, r \to \infty \, ,
\end{aligned}
\]
satisfying the conditions on $s(r)$ in Definition \ref{def:VOFL} and granting that ${K}_{s(\cdot)} \in L^1 _{\rm loc} (\R^3)$. 
Furthermore, {we can prove the following lemma.}
\begin{lemma}
\label{lemma-example}
The function $p (r) = r {K}_{s(\cdot)} (r)$ is positive decreasing function on $r>0$ such that
\[
\lim _{r \to 0} p (r) = + \infty \, , \qquad \lim _{r \to \infty} p (r) = 0 \, .
\]
\end{lemma}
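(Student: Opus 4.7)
The plan is to work directly with the explicit form: substituting $n=3$ into \eqref{eq:VOKernel} gives
\[
p(r) = r\, {K}_{s(\cdot)}(r) = \frac{\Gamma\!\left(\frac{3}{2}-s(r)\right)}{4^{s(r)}\,\pi^{3/2}\,\Gamma(s(r))}\, r^{2s(r)-2},
\]
so the whole statement reduces to a one-variable calculus problem. Positivity on $r>0$ is immediate, since $s(r) \in (3/5, 9/10) \subset (0, 3/2)$ keeps both $\Gamma(3/2-s(r))$ and $\Gamma(s(r))$ strictly positive. The limit behaviour follows by combining the pointwise limits $s(0^+)=3/5$ and $s(\infty)=9/10$ with the fact that the exponent $2s(r)-2$ stays in $(-4/5,-1/5)$, hence is strictly negative: so $r^{2s(r)-2}\to +\infty$ as $r\to 0^+$ and $\to 0$ as $r\to\infty$, while the gamma prefactor is continuous, bounded, and bounded away from $0$ on the closure.

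For monotonicity, I pass to logarithms,
\[
\log p(r) = \log\Gamma\!\left(\tfrac{3}{2}-s(r)\right) - s(r)\log 4 - \log\Gamma(s(r)) - \tfrac{3}{2}\log\pi + (2s(r)-2)\log r,
\]
and differentiate using the digamma function $\psi=(\log\Gamma)'$, obtaining
\[
\frac{p'(r)}{p(r)} = s'(r)\!\left[\,2\log r - \psi\!\left(\tfrac{3}{2}-s(r)\right) - \psi(s(r)) - \log 4\,\right] + \frac{2s(r)-2}{r}.
\]
A direct computation from \eqref{eq:firstexample} gives $s'(r)=3/[10(1+r)^2]>0$, and $2s(r)-2<0$ throughout, so it remains only to show this quantity is negative for every $r>0$.

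For the two boundary regimes the sign is clear: near $r=0^+$, the term $(2s(r)-2)/r \sim -4/(5r)$ diverges to $-\infty$ and dominates the bounded bracket; near $r=\infty$, $s'(r)=O(r^{-2})$ is small enough that $s'(r)\,(2\log r+\text{bounded})$ vanishes, while $(2s(r)-2)/r \sim -1/(5r)$ carries the sign. The genuinely delicate step, and the main obstacle, is the intermediate range. My plan is to split $(0,\infty)=(0,r_1]\cup[r_1,r_2]\cup[r_2,\infty)$ with explicit $r_1,r_2$ (say $r_1=1$, $r_2=10$). On the two outer intervals the asymptotic dominations above are made quantitative by elementary bounds; on the compact middle interval I use continuity of $s$, $s'$, and of $\psi$ to estimate the bracket uniformly, exploiting that $\psi(s)+\psi(3/2-s)+\log 4$ is continuous and uniformly bounded on $s\in[3/5,9/10]$, so the negative second term $(2s(r)-2)/r$ beats $s'(r)$ times any absolute bound available there. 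The digamma bookkeeping is the only non-routine ingredient; everything else is direct substitution.
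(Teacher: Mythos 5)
Your proposal follows essentially the same route as the paper: positivity by inspection, limits by asymptotics, and monotonicity by differentiating and analyzing the sign. Your logarithmic-derivative formula
\[
\frac{p'(r)}{p(r)} = s'(r)\Bigl[\,2\log r - \psi\bigl(\tfrac{3}{2}-s(r)\bigr) - \psi(s(r)) - \log 4\,\Bigr] + \frac{2s(r)-2}{r}
\]
is exactly equivalent (after clearing denominators) to the paper's displayed formula for $p'(r)$, so the two computations agree. The positivity and limit arguments are fine; the check that $r^{2s(r)-2}\to r^{-4/5}$ (resp.\ $r^{-1/5}$) as $r\to 0^+$ (resp.\ $r\to\infty$), using $s(r)-s_1 = \mathcal{O}(r)$ and $s(r)-s_2 = \mathcal{O}(r^{-1})$, and boundedness of the gamma prefactor on $[3/5,9/10]$, is sound.

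Where you diverge is the sign analysis of $p'$. Your interval-splitting plan ($r_1=1$, $r_2=10$, outer asymptotics plus a uniform bound on the middle) is a valid strategy, but you explicitly leave the middle-interval bookkeeping as a sketch, and that is precisely the crux of the lemma. The paper instead uses the monotonicity of the digamma function in one uniform stroke: since $s(r)$ and $\tfrac{3}{2}-s(r)$ both lie in $[3/5,9/10]$, one has $\psi(s(r))+\psi(\tfrac{3}{2}-s(r)) \ge 2\psi(3/5)$, which replaces the digamma term in the bracket by an explicit constant and reduces the claim to showing positivity of $2r^2 + \bigl(10 + 2\log 8 + 6\psi(3/5)\bigr) r + 8 - 6r\log r$ on $r>0$. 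That function has value $8$ at $r=0$ and one checks (e.g.\ via a second-derivative argument) that its first derivative is everywhere positive, so the inequality holds without any splitting. In short: your route would likely work, but it is not completed and is more cumbersome than necessary; the paper's uniform digamma bound turns the hard middle range into an elementary single-variable inequality. If you want to finish your version, you will still need something like that uniform bound on $\psi(s)+\psi(\tfrac{3}{2}-s)$ over $s\in[3/5,9/10]$ to make the middle interval estimate rigorous, at which point the splitting becomes superfluous.
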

\begin{proof}
First, $p (r)$ is positive since it is defined as the product of two positive functions. Second, the two limits are obtained by studying the asymptotic behavior of $p (r)$, {\em i.e.},
\begin{equation*}
\begin{split}
p (r) &=
\frac{\Gamma \left(\frac{9}{10}\right)}{2^{6/5} \pi ^{3/2} \, \Gamma \left(\frac{3}{5}\right) \, r^{4/5}}
- \frac{3 \Gamma \left(\frac{9}{10}\right) r^{1/5}}{10 \cdot {2}^{6/5} \, \pi ^{3/2} \, \Gamma \left(\frac{3}{5}\right)} \times \\
& \qquad \times  
\left[-2 \log r+\log 4+\psi \left(\frac{3}{5}\right)+\psi \left(\frac{9}{10}\right)\right]+\mathcal{O}(r^{6/5}) \\
& \qquad \mbox{as} \,\, r \to 0 \, , \\
p (r) &= \frac{\Gamma \left(\frac{3}{5}\right)}{2^{9/5} \, \pi ^{3/2} \, \Gamma \left(\frac{9}{10}\right)} \, 
\frac{1}{r^{1/5}} +
\mathcal{O}\left(r^{-6/5}\right) \\
& \qquad \mbox{as} \,\, r \to \infty \, .
\end{split}
\end{equation*}
with $\psi (z) := \frac{\d }{\d z } \log \Gamma (z)$ denoting the digamma function. Finally, in order to prove the monotonicity of $p (r)$, we compute its first derivative with respect to $r$, here denoted with the prime:
\begin{eqnarray*}
p'(r) &\!\!=\!\!&
-\frac{{K}_{s(\cdot)}(r)}{10 (1+r)^2} \Bigg\{
2 r (r+5+\log 8)-6 r \log r + 8  \\
& &+ 3 r \left[\psi \left(\frac{9 r+6}{10( r+1)}\right)+\psi \left(\frac{3}{10} \left(2+\frac{1}{r+1}\right)\right)\right]\Bigg\}  \\
\end{eqnarray*}
By taking advantage of the monotonicity of the digamma function (see the Bohr-Mollerup Theorem in \cite{BMTh}, and, also, \cite{WW}) we have proved that $p'(r) < 0$ for all $r>0$.
\end{proof}

One can then use this result to prove the following proposition. 
\begin{proposition}
The function ${K}_{s(\cdot)} (r)$ with $s(r)$ as in Eq. \eqref{eq:firstexample} has a positive Fourier transform. 
\end{proposition}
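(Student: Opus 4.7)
The plan is as follows. Since $K_{s(\cdot)}$ is radial and $n=3$, I would first invoke the standard formula for the Fourier transform of a radial function on $\R^3$, which gives, setting $k = |\bm{k}| > 0$,
\[
\widehat{K}_{s(\cdot)}(k) = \frac{4\pi}{k} \int_0^\infty r\, K_{s(\cdot)}(r)\, \sin(kr)\, \d r = \frac{4\pi}{k} \int_0^\infty p(r)\, \sin(kr)\, \d r.
\]
Hence the claim reduces to showing that the sine transform of $p$ is strictly positive for every $k > 0$.

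I would then exploit the monotonicity provided by Lemma \ref{lemma-example} via the classical pairing argument for sine transforms of monotone functions. Partition $(0,\infty)$ into the half-period intervals $I_n = [n\pi/k,(n+1)\pi/k]$, on which $\sin(kr)$ has constant sign $(-1)^n$, and after the shift $r = u + n\pi/k$ set
\[
a_n := \int_0^{\pi/k} p\!\left(u + \tfrac{n\pi}{k}\right) \sin(ku)\, \d u \qquad (n \ge 0).
\]
Since $p$ is strictly positive and strictly decreasing on $(0,\infty)$ (the bound $p'(r) < 0$ being exactly what was established in the proof of Lemma \ref{lemma-example}), the sequence $\{a_n\}$ is strictly decreasing and positive, and since $p(r) \to 0$ as $r \to \infty$ one has $a_n \to 0$. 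By the Leibniz criterion the alternating series $\sum_{n \ge 0} (-1)^n a_n$ converges, and grouping consecutive pairs yields
\[
\sum_{n=0}^\infty (-1)^n a_n = (a_0 - a_1) + (a_2 - a_3) + \cdots > 0.
\]
Identifying this sum with $\int_0^\infty p(r)\sin(kr)\,\d r$ then gives $\widehat{K}_{s(\cdot)}(k) > 0$ for all $k>0$.

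The main obstacle is justifying the manipulation of an only conditionally convergent improper integral. Near the origin $p(r) \sim c\, r^{-4/5}$ is absolutely integrable and the extra factor $\sin(kr)$ only tames the singularity; near infinity, however, $p(r) \sim c'\, r^{-1/5}$, so $\int_0^\infty p(r)\sin(kr)\,\d r$ is not absolutely convergent and its very existence must be deduced from the pairing construction itself. The point is that the truncated integrals $\int_0^{N\pi/k} p(r)\sin(kr)\,\d r$ coincide with the partial sums $\sum_{n=0}^{N-1}(-1)^n a_n$, whose convergence is controlled by the Leibniz bound $|R_N| \le a_N \to 0$. Once this identification and convergence are in place, strict positivity of $\widehat{K}_{s(\cdot)}$ is an immediate consequence of the strict monotonicity of $p$ already proven in Lemma \ref{lemma-example}.
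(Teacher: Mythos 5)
Your proof is correct and begins with the same reduction as the paper: the radial Fourier transform formula in three dimensions converts the claim into positivity of the Fourier--sine transform of $p(r) = r\,K_{s(\cdot)}(r)$, which is then driven by the monotonicity established in Lemma~\ref{lemma-example}. Where you diverge is in the final step: the paper disposes of it by citing a known positivity theorem for sine transforms of positive, decreasing functions that vanish at infinity and have a weak singularity at the origin, whereas you prove that fact from scratch via the half-period pairing into intervals $[n\pi/k,(n+1)\pi/k]$, forming the alternating sequence $a_n$, and invoking the Leibniz criterion to get both convergence of the improper integral and strict positivity of its value. Your version is self-contained at the cost of some length, and the points you make explicit --- that the truncated integrals coincide with the alternating partial sums, that conditional convergence at infinity is controlled by the Leibniz remainder bound $|R_N|\le a_N\to 0$, and that the $r^{-4/5}$ singularity at the origin is integrable --- are precisely the ingredients needed to make the cited theorem rigorous in this particular setting, so the two proofs are compatible rather than competing.
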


\begin{proof}
First, let us recall that the Fourier transform of a radial function, in three space dimensions, reads
\begin{eqnarray*}
\wh{K}_{s(\cdot)} (k) &=& \frac{4 \pi}{k} \int_0 ^\infty r {K}_{s(\cdot)} (r) \, \sin(k r) 
\, \d r \\
&=& \frac{4 \pi}{k} \int_0 ^\infty p(r) \, \sin(k r) \, \d r.
\end{eqnarray*}
In other words, computing $\wh{K}_{s(\cdot)} (k)$ corresponds to computing the Fourier-Sine transform of $p(r)$ defined as in Lemma \ref{lemma-example}. Since $p(r)$ is a positive, decreasing function of $r$ with a weak singularity at $r=0$ and vanishing as $r\to \infty$, its Fourier-Sine transform is positive \cite{positivity}, thus implying $\wh{K}_{s(\cdot)} (k) > 0$ for $k>0$. 
\end{proof}

This result shows that $\wh{K}_{s(\cdot)} (k)$ does not vanish on $k>0$, thus proving explicitly that $K_{s(\cdot)} (k)$ satisfies the requirements in Definition \ref{def:VOFL}. {In Figure \ref{fig:example_plot1} we report plots of $\widehat K_{s(\cdot)} (k)$} and of the fundamental solution of the variable-order Poisson's equation ({\em i.e.}, the solution of \eqref{eq:vopoisson}) with $s(r)$ as in \eqref{eq:firstexample}, {together with the two limiting cases $s=0.6$ and $s=0.9$.}

\begin{figure}[htbp]
    \centering
   \includegraphics[scale=0.7]{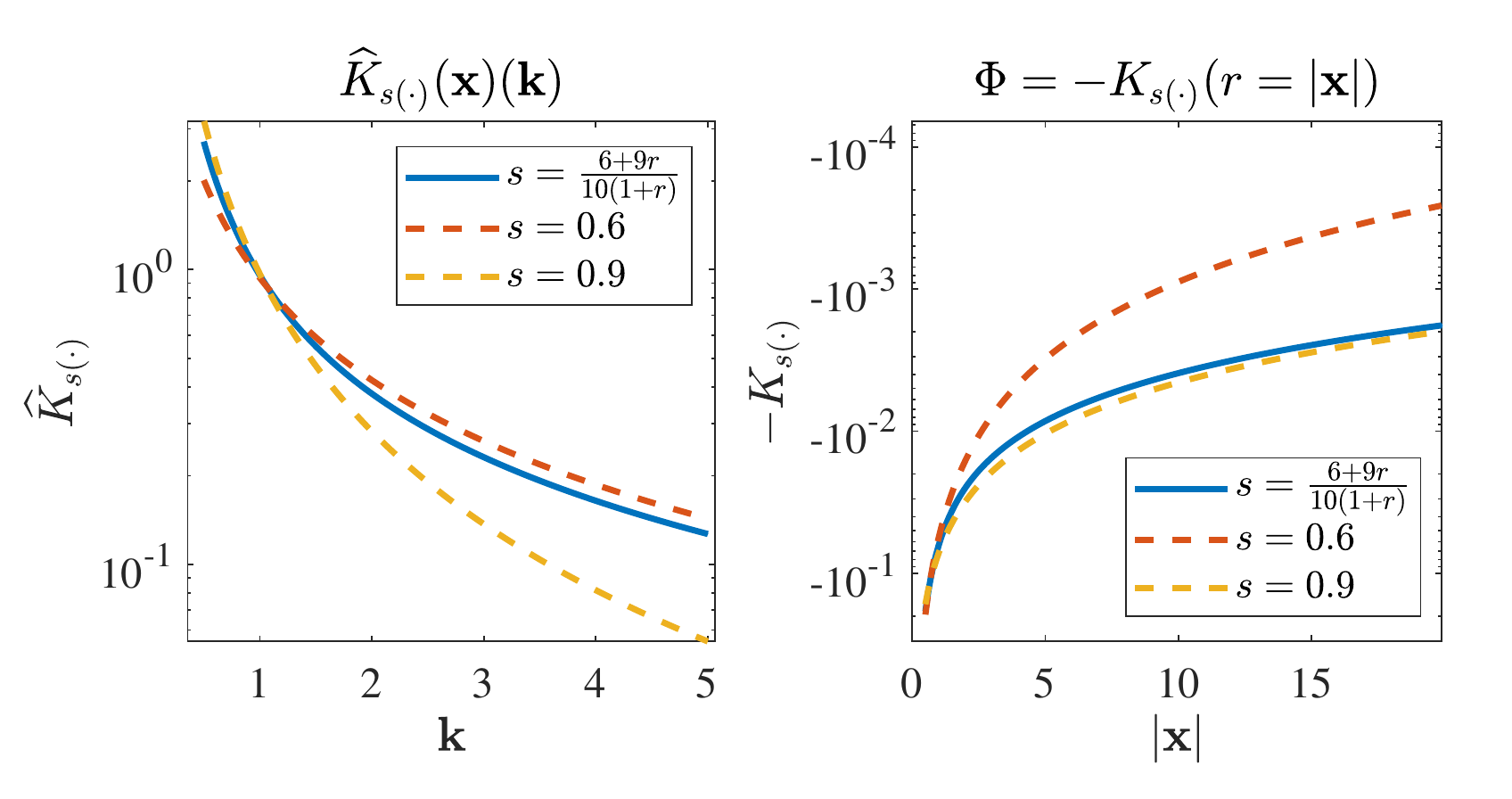}
   \caption{Plots of $\widehat{K}_{s(\cdot)} (\bm{x}) (\bm{k})$ and $\Phi = - {K}_{s(\cdot)}(r=|\bm{x}|)$}
   \label{fig:example_plot1}
\end{figure}

\subsection{Second example}

We consider the same setting as in Section \ref{sec:firstexample}, with
\begin{equation}\label{eq:secondexample}
s(r) = \frac{1}{10} \frac{11+13 r}{1 +  r} \, ,
\end{equation}
which, again, belong to $C^1[\R^3; (1.1, 1.3)]$ and such that
\[
\begin{aligned}
& s (r) = 11/10 + \mathcal{O} (r) \, ,  &  &\quad \mbox{as} \,\, r \to 0 \, , \\
& s (r) = 13/10 + \mathcal{O} (r^{-1}) \, ,  &  &\quad \mbox{as} \,\, r \to \infty \, ,
\end{aligned}
\]
{\em i.e.}, $1<s(r)<3/2$ for $r > 0$, thus satisfying the conditions on $s(r)$ in Definition \ref{def:VOFL} and yielding ${K}_{s(\cdot)} \in L^1 _{\rm loc} (\R^3)$. 

This case is more difficult to compute numerically because the Fourier transform is only defined as a generalized function or distribution. As indicated previously, using radial symmetry in 3D, we use:
\[
\wh{K}_{s(\cdot)} (k) = \frac{4 \pi}{k} \int_0^\infty r {K}_{s(\cdot)} (r) \, \sin(k r) 
\, \d r.
\]
This integral is defined in the classical sense only when $r {K}_{s(\cdot)} (r) \to 0$ for $r \to \infty$. This was the case in Example 1 but for Example 2, we need to consider the following definition:
\[
\wh{K}_{s(\cdot)} (k) = 
\lim_{\lambda \to 0+} \frac{4 \pi}{k} \int_0^\infty e^{-\lambda r} r {K}_{s(\cdot)} (r) \, \sin(k r) 
\, \d r.
\]
To estimate this integral numerically, we partition it using the period of $\sin(kr)$:
\[
\int_0^\infty e^{-\lambda r} r {K}_{s(\cdot)} (r) \, \sin(k r) 
\, \d r
= \sum_{i=0}^\infty
\int_{i2\pi/k}^{(i+1)2\pi/k} e^{-\lambda r} r {K}_{s(\cdot)} (r) \, \sin(k r) 
\, \d r. 
\]
For $r$ sufficiently large, we have:
\[
e^{-\lambda r} r {K}_{s(\cdot)} (r)
\propto
e^{-\lambda r} r^a, 
\]
with $a = 1-n+2s(\infty) = 0.6$ (in this example). We can then estimate the integral over one period of $\sin(kr)$ (for $i$ sufficiently large):
\begin{align}
& \Delta \hat{K}_{s(\cdot),\;i} = \int_{i2\pi/k}^{(i+1)2\pi/k} e^{-\lambda r} r {K}_{s(\cdot)} (r) \, \sin(k r) 
\, \d r = \notag \\
& \hspace*{40pt} \frac{8 \pi^2}{k^3}
\big(\lambda - \frac{a}{r_i}\big) e^{-\lambda r_i} r_i {K}_{s(\cdot)}(r_i) + O(e^{-\lambda r_i} k^{-4}), \label{eq:dki} \\
& r_i = (i+1/2) \frac{2\pi}{k}. \notag
\end{align}
We see that the sum over $i$ is convergent as expected. In Figure \ref{fig: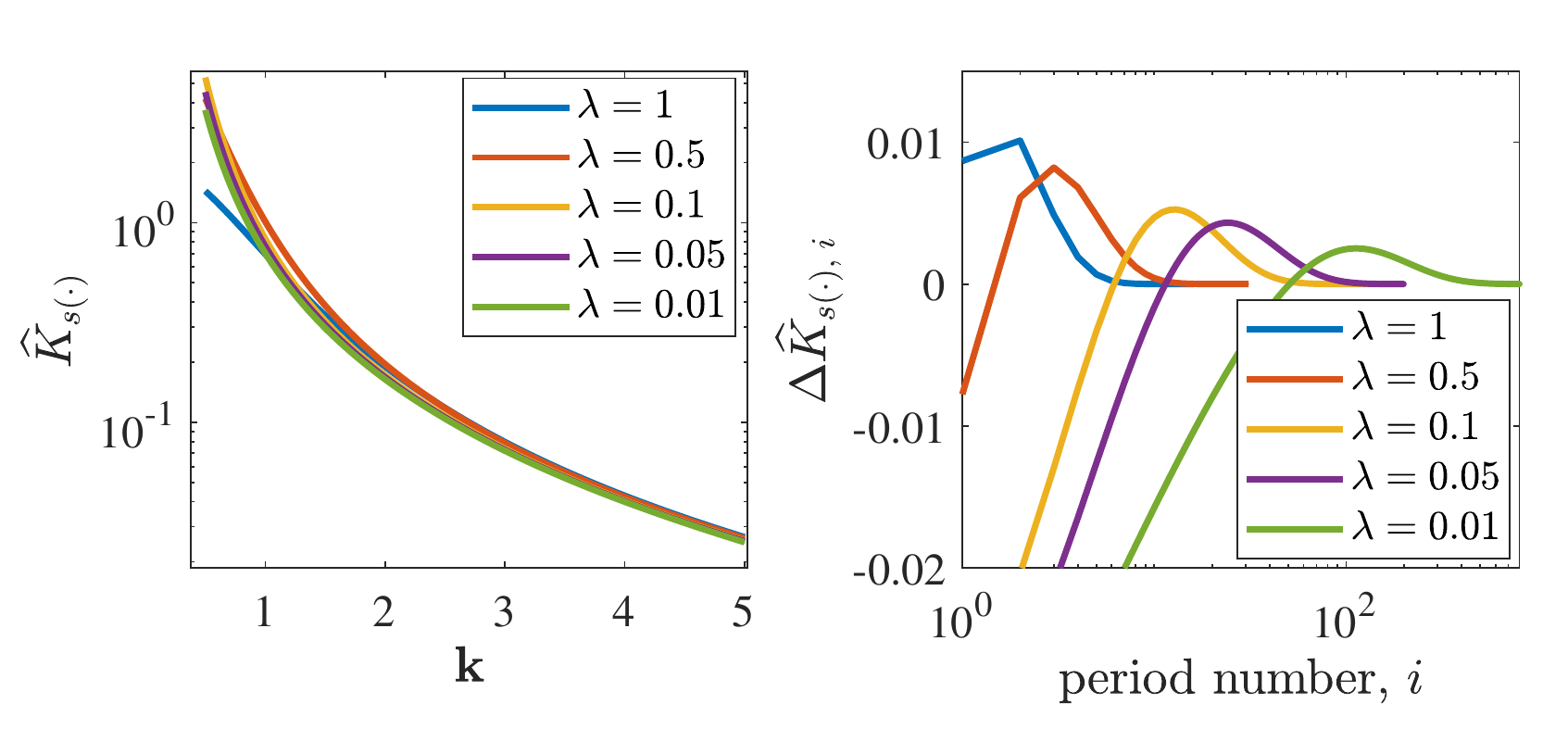}, we show $\widehat{K}_{s(\cdot)}$ and the value of $\Delta \widehat{K}_{s(\cdot),\;i}$ vs the period number $i$.

\begin{figure}[htbp]
  \centering
    \includegraphics[scale=0.7]{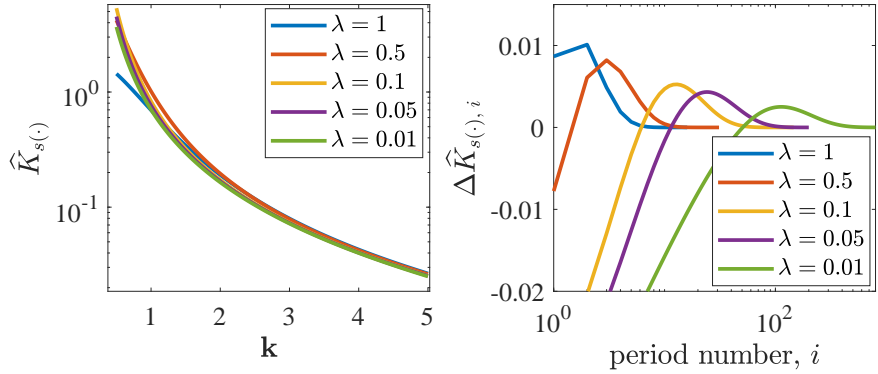}
    \caption{Plots of $\widehat{K}_{s(\cdot)} $ for different values of $\lambda$ vs the frequency $k$ (left) and $\Delta \widehat{K}_{s(\cdot),\;i}$ for $k=5$ for different values of $\lambda$ vs the period number $i$ (right). As expected the convergence is slower for small $\lambda$. The positive local maximum for $\Delta \widehat{K}_{s(\cdot),\;i}$ occurs around $i \approx a k\lambda^{-1}$.}
    \label{fig:khat_dkhat_aug.pdf}
\end{figure}

Using this analysis, we can estimate the error when numerically truncating the sum over $i$. We can use the following estimate for $I$ sufficiently large:
\begin{equation}\label{eq:err}
\Big | \int_{2\pi I/k}^{\infty} e^{-\lambda r} r {K}_{s(\cdot)}(r) \, \sin(k r) 
\, \d r
\Big| = \frac{e^{-\lambda r_I} r_I {K}_{s(\cdot)}(r_I)}{k} + O(e^{-\lambda r_I} k^{-2}),
\end{equation}
with $r_I = 2\pi I / k$.

In Figure \ref{fig:khat_error_aug} we show $\Delta \widehat{K}_{s(\cdot),\;i}$, computed numerically and using Eq.~\eqref{eq:dki}, and the truncation error for $\widehat{K}_{s(\cdot)}$ vs the period $i$ computed numerically and with Eq.~\eqref{eq:err}.

\begin{figure}[htbp]
  \centering
    \includegraphics[scale=0.7]{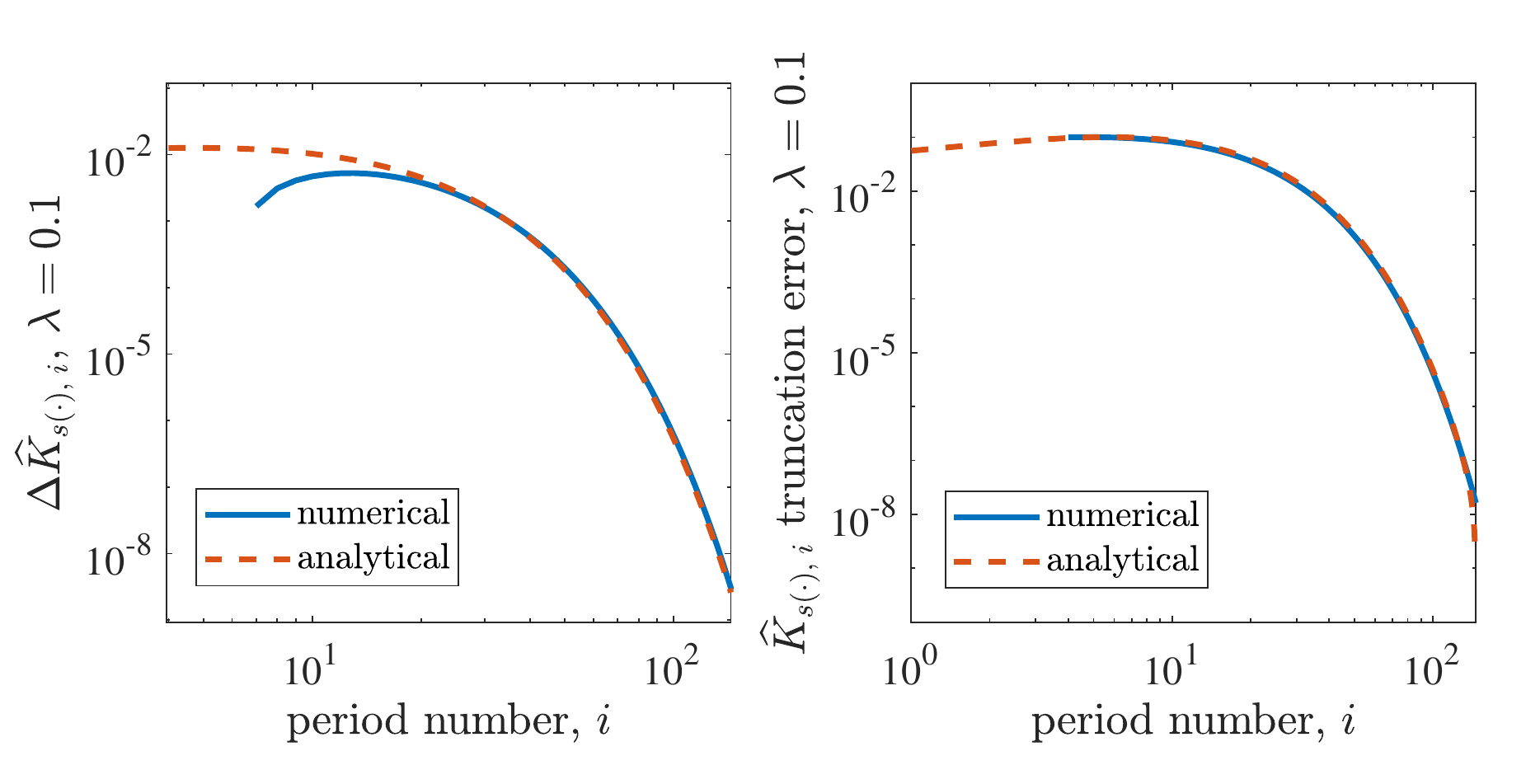}
    \caption{Plots of $\Delta \widehat{K}_{s(\cdot),\;i}$ computed numerically and with Eq.~\eqref{eq:dki} (left) and the truncation error estimated numerically and with Eq.~\eqref{eq:err} (right) for $k=5$.}
    \label{fig:khat_error_aug}
\end{figure}

When $s(\infty) < 1$, $a<0$ and the integral converges even when $\lambda = 0$. This was the case in Example 1.

For $s(\infty) > 1$ and $a > 0$, the convergence with respect to $\lambda \to 0+$ is more difficult to establish and we will rely simply on the numerical benchmarks for that.

In Figure \ref{fig:khat_bounds_aug}, we report plots of $\widehat K_{s(\cdot)} (k)$ and of the fundamental solution of the variable-order Poisson's equation with $s(r)$ as in Eq.~\eqref{eq:firstexample}, together with the two limiting cases $s=1.1$ and $s=1.3$.

\begin{figure}[htbp]
  \centering
    \includegraphics[scale=0.7]{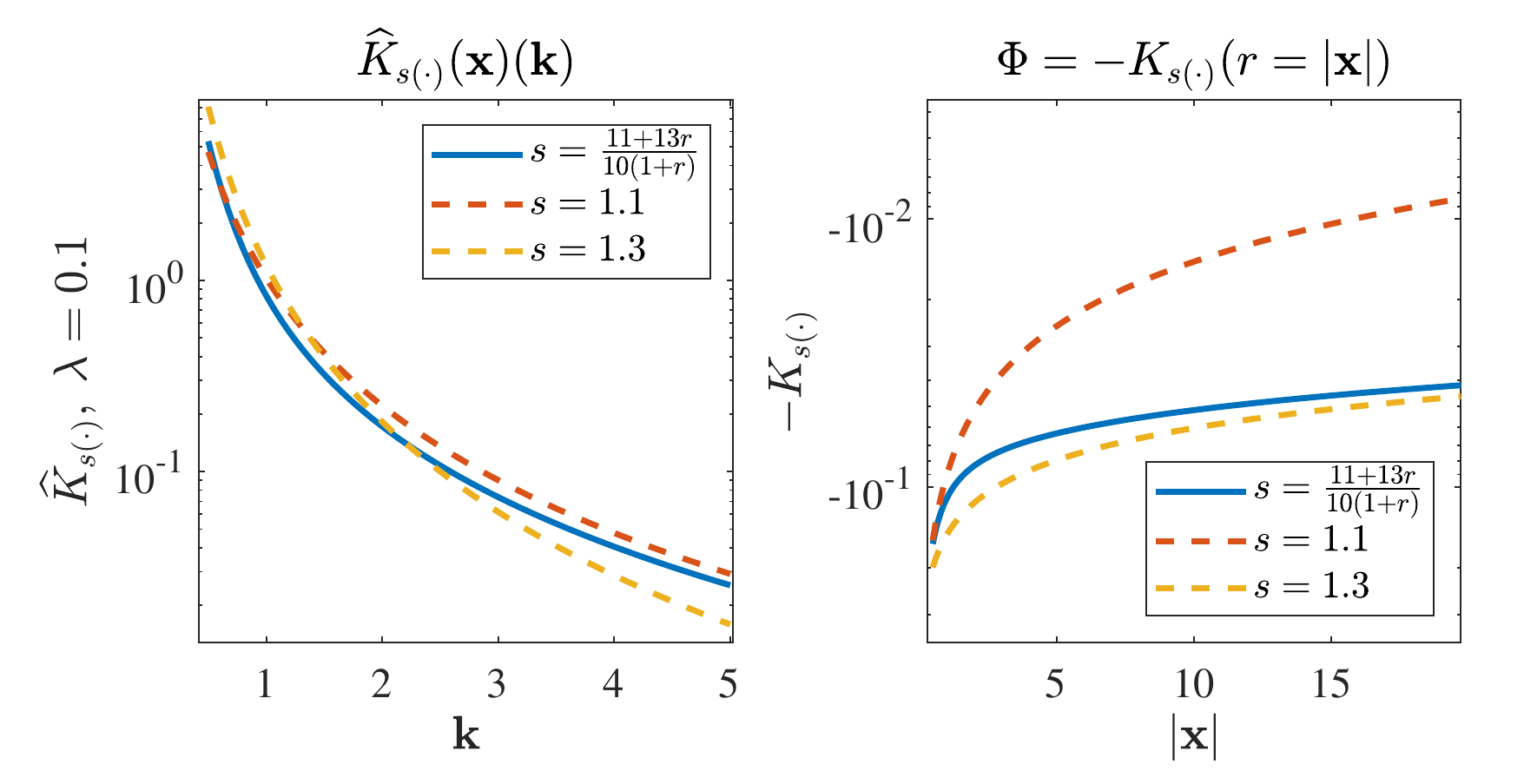}
    \caption{Plots of $\hat{K}_{s(\cdot)} (\bm{x}) (\bm{k})$ and $\Phi = - {K}_{s(\cdot)}(r=|\bm{x}|)$.}
    \label{fig:khat_bounds_aug}
\end{figure}


\vspace*{-3pt} 

\section{Discussion and outlook} \label{sec:conclusions}
\setcounter{section}{4} \setcounter{equation}{0}

We presented a novel definition of the variable-order fractional Laplacian based on a natural generalization {of the constant-order Riesz potential. A remarkable feature of this definition} is that it extends the range of the fractional parameter $s(\bm{x})$ from $(0,1)$ to the entire set $(0, n/2)$, with $n$ denoting the number of spatial dimensions. Also, we investigated the properties of the generalized Poisson's equation involving this operator and we computed its Green's function. Lastly, we provided two explicit realizations of this Green's function, specifically for $0.6<s(\bm{x})<0.9$ and $1.1<s(\bm{x})<1.3$. Although for the first example we can rigorously check that
${K}_{s(\cdot)} (\bm{x})$ satisfies the requirements of Definition \ref{def:VOFL}, this is not the case for the second example and one can only verify these conditions numerically.

This novel definition provides a valuable tool for studying various physical scenarios displaying nonlocal properties. As an example, the VOFL as in Definition \ref{def:VOFL} may provide insights for validating fractional Newtonian gravity \cite{MOND1,MOND2} against astrophysical observations. In other words, by using a regression procedure based on observations, one could identify the optimal $\widehat{K}$, from which $K$ can be then computed.

It is worth pointing out that this work is intended as a ``proof of concept'' aimed at paving a new way for the study of variable-order fractional problems on $\R^n$. This approach thus begs for further investigations in the realm of function spaces and, more in general, for a more rigorous underlying mathematical structure. This is, however, not the scope of this study and therefore this matter is left to future works. 

\vspace*{-2pt}

\section*{Acknowledgments}	

A. Giusti is supported by the European Union's Horizon 2020 research and innovation programme under the Marie Sk\l{}odowska-Curie Actions (grant agreement No. 895648 -- CosmoDEC). His work has been carried out in the framework of the activities of the Italian National Group for Mathematical Physics [Gruppo Nazionale per la Fisica Matematica (GNFM), Istituto Nazionale di Alta Matematica (INdAM)]. 

R. Garrappa is partially supported by the COST Action CA 15225 ``Fractional-order systems-analysis, synthesis and their importance for future design'' and by the Italian National Group for Scientific Computing (INdAM-GNCS) through an INdAM-GNCS 2020 project. 

E. Darve and M. D'Elia are partially supported by the U.S. Department of Energy, Office of Advanced Scientific Computing Research under the Collaboratory on Mathematics and Physics-Informed Learning Machines for Multiscale and Multiphysics Problems (PhILMs) project (DE-SC0019453). 

M. D'Elia is also supported by the Sandia National Laboratories (SNL) Laboratory-directed Research and Development program. SNL is a multimission laboratory managed and operated by National Technology and Engineering Solutions of Sandia, LLC., a wholly owned subsidiary of Honeywell International, Inc., for the U.S. Department of Energy's National Nuclear Security Administration under contract {DE-NA0003525}. This paper, SAND2021-10850 R, describes objective technical results and analysis. Any subjective views or opinions that might be expressed in this paper do not necessarily represent the views of the U.S. Department of Energy or the United States Government.


\renewcommand{\bibliofont}{\normalsize}
\bibliographystyle{fcaa-like-bib-style}
\bibliography{Biblio}


 \newpage 

 \it

 \noindent $^1$ Institute for Computational and Mathematical Engineering,\\
Stanford University,\\
Stanford, CA, 94305, USA\\
and\\
Department of Mechanical Engineering,\\
Stanford University,\\
Stanford, CA, 94305, USA\\[4pt]
$^2$ Computational Science and Analysis,\\
Sandia National Laboratories,\\
Livermore, CA, USA\\[4pt]
$^3$ Department of Mathematics\\
 University of Bari\\
 Via E. Orabona 4, 70126 Bari, ITALY\\
 and\\
 the INdAM Research group GNCS\\	
 e-mail: roberto.garrappa@uniba.it\\[4pt]
$^4$ 
 Institute for Theoretical Physics, \\
 ETH Zurich,\\
 Wolfgang-Pauli-Strasse 27,\\
 8093, Zurich, Switzerland\\
 e-mail: agiusti@ubishops.ca
 \hfill Received:
 \\[4pt]
 $^5$ Stanford University,\\
Stanford, CA, 94305, USA
\\[4pt] 

\rm

\medskip

\vskip 0.05cm  
\hrule width40mm height0.10mm 
\vskip 0.05cm

\end{document}